\theoremstyle{plain}
\newtheorem{theorem}{Theorem}[section]
\theoremstyle{definition}
\newtheorem{definition}[theorem]{Definition}
\theoremstyle{remark}
\date{}
\title{\bf A Short Note on Fuzzy Characteristic \\Interior Ideals of Po-$\Gamma$-Semigroups}\vspace{.25 in}
\author{ {\bf Samit Kumar Majumder}\\
Tarangapur N.K High School, Tarangapur,\\
Uttar Dinajpur, West Bengal-733 129, INDIA\\
{\tt samitfuzzy@gmail.com}
 }
\begin{document}
\maketitle

\begin{abstract}

In this paper the concept of fuzzy characteristic interior ideals in partially ordered $\Gamma$-semigroups$($Po-$\Gamma$-semigroups$)$ has been introduced. It is observed that it satisfies level subset criterion and characteristic function criterion.\\

\textbf{AMS Mathematics Subject Classification(2000):}\textit{\ }08A72,
20M12, 3F55\\

\textbf{Key Words and Phrases:}\textit{\ } Po-$\Gamma$-Semigroup, Fuzzy interior ideal, Fuzzy characteristic interior ideal.
\end{abstract}

\section{Introduction}
A  semigroup is an algebraic structure consisting of a non-empty set $S$ together with an associative binary operation\cite{H}. The formal study of semigroups began in the early 20th century. Semigroups are important in many areas of mathematics, for example, coding and language theory, automata theory, combinatorics and mathematical analysis. The concept of fuzzy sets was introduced by {\it Lofti Zadeh}\cite{Z} in his classic paper in 1965. {\it Azirel Rosenfeld}\cite{R} used the idea of fuzzy set to introduce the notions of fuzzy subgroups. {\it Nobuaki Kuroki}\cite{K1,K2,K3,Mo} is the pioneer of fuzzy ideal theory of semigroups. The idea of fuzzy subsemigroup was also introduced by {\it Kuroki}\cite{K1,K3}. In \cite{K2}, {\it Kuroki} characterized several classes of semigroups in
terms of fuzzy left, fuzzy right and fuzzy bi-ideals. Others who worked on fuzzy semigroup theory, such as {\it X.Y.
Xie}\cite{X1,X2}, {\it Y.B. Jun}\cite{J}, are mentioned in the bibliography. {\it X.Y. Xie}\cite{X1} introduced the idea of extensions of fuzzy ideals in semigroups.

The notion of a $\Gamma$-semigroup was introduced by {\it Sen} and {\it Saha}\cite{Sen2} as a generalization of semigroups and ternary semigroup. $\Gamma$-semigroup have been analyzed by lot of mathematicians, for instance by {\it Chattopadhyay}\cite{C1,C2}, {\it Dutta} and {\it Adhikari}\cite{D1,D2}, {\it Hila}\cite{H1,H2}, {\it Chinram}\cite{Ch1}, {\it Saha}\cite{Sa}, {\it Sen} et al.\cite{Sen1,Sen2,Sa}, {\it Seth}\cite{Se}. {\it S.K. Sardar} and {\it S.K. Majumder}\cite{D3,D4,S1,S2} have introduced the notion of fuzzification of ideals, prime ideals, semiprime ideals and ideal extensions of $\Gamma$-semigroups and studied them via its operator semigroups. Authors who worked on fuzzy partially ordered $\Gamma$-semigroup theory are {\it Y.I. Kwon} and {\it S.K. Lee}\cite{Kw}, {\it Chinram}\cite{Ch2}, {\it P. Dheena} and {\it B. Elavarasan}\cite{Dh}, {\it M. Siripitukdet} and {\it A. Iampan}\cite{Si1,Si2,Si3}. In \cite{Kh}, {\it A.Khan, T. Mahmmod} and {\it M.I. Ali} introduced the concept of fuzzy interior ideals in Po-$\Gamma$-semigroups. In this paper the concept of fuzzy characteristic ideals of a Po-$\Gamma$-semigroup has been introduced and it is observed that it satisfies level subset criterion as well as characteristic function criterion.
\section{Preliminaries}

In this section we discuss some elementary definitions that we use
in the sequel.\\

\begin{definition}
\cite{I} Let $S$ and $\Gamma$ be two non-empty sets. $S$ is called a $\Gamma$-semigroup if there exist mapping from
$S\times\Gamma\times S$ to $S,$ written as $(a,\alpha,b)\rightarrow a\alpha b$ satisfying the identity $(a\alpha b)\beta c=a\alpha(b\beta c)$ for all $a,b,c\in S$ and for all $\alpha,\beta\in\Gamma.$
\end{definition}

\begin{definition}
\cite{I} A $\gamma$-semigroup $S$ is called a Po-$\Gamma$-semigroup if for any $a,b,c\in S$ and $\gamma\in\Gamma,$ $a\leq b$ implies $a\gamma c\leq b\gamma c$ and $c\gamma a\leq c\gamma b.$
\end{definition}

\begin{definition}
Let $S$ be a Po-$\Gamma$-semigroup. By a subsemigroup of $S$ we mean a non-empty subset $A$ of $S$ such that $A\Gamma A\subseteq A.$
\end{definition}

\begin{definition}
Let $S$ be a po-$\Gamma$-semigroup. A subsemigroup $A$ of $S$ is called an interior ideal of $S$ if $(1)$ $S\Gamma A\Gamma S\subseteq A,$ $(2)$ $a\in A$ and $b\leq a$ imply $b\in A.$
\end{definition}

\begin{definition}
\cite{Z} A fuzzy subset $\mu$ of a non-empty set $X$ is a function $\mu:X\rightarrow [0,1].$
\end{definition}

\begin{definition}
\cite{S1} Let $\mu$ be a fuzzy subset of a non-empty set $X.$ Then the set $\mu_{t}=\{x\in X:\mu(x)\geq t\}$ for $t\in [0,1],$ is called the $t$-cut of $\mu.$
\end{definition}

\begin{definition}
A non-empty fuzzy subset $\mu$ of a Po-$\Gamma$-semigroup $S$ is called a fuzzy subsemigroup of $S$ if $\mu(x\gamma y)\geq\min\{\mu(x),\mu(y)\}\forall x,y\in S,\forall\gamma\in\Gamma.$
\end{definition}

\begin{definition}
A fuzzy subsemigroup $\mu$ of a Po-$\Gamma$-semigroup $S$ is called a fuzzy interior ideal of $S$ if $(1)$ $\mu(x\alpha a\beta y)\geq\mu(a)\forall x,a,y\in S,\forall\alpha,\beta\in\Gamma,$ $(2)$ $x\leq y$ implies $\mu(x)\geq\mu(y)\forall x,y\in S.$
\end{definition}


\section{Main Results}

In what follows $Aut(S)$ denote the set of all automorphisms of the Po-$\Gamma$-semigroup $S.$

\begin{definition}
An interior ideal $A$ of a Po-$\Gamma$-semigroup $S$ is called a characteristic interior ideal of $S$ if $f(A)=A$ $\forall f\in Aut(S).$
\end{definition}

\begin{definition}
A fuzzy interior ideal $\mu$ of a Po-$\Gamma$-semigroup $S$ is called a fuzzy characteristic interior ideal of $S$ if $\mu(f(x))=\mu(x)$ $\forall x\in S$ and $\forall f\in Aut(S).$
\end{definition}

\begin{theorem}
A non-empty fuzzy subset $\mu$ of a Po-$\Gamma$-semigroup $S$ is a fuzzy characteristic interior ideal of $S$ if and only if the $t$-cut of $\mu$ is a characteristic interior ideal of $S$ for all $t\in [0,1],$ provided $\mu_{t}$ is non-empty.
\end{theorem}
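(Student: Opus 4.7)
The statement has the shape of a standard level-set (``$t$-cut'') transfer theorem, so I would split it in the obvious way: the ``interior-ideal'' part of the biconditional is already the usual level-subset criterion for fuzzy interior ideals of Khan--Mahmmod--Ali, which I would simply cite. The only genuinely new content is to transfer the $\operatorname{Aut}(S)$-invariance between $\mu$ and its level sets $\mu_{t}$.

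\textbf{Forward direction.} Assume $\mu$ is a fuzzy characteristic interior ideal and fix $t\in[0,1]$ with $\mu_{t}\neq\varnothing$. By the cited level-subset criterion, $\mu_{t}$ is an interior ideal of $S$. Let $f\in\operatorname{Aut}(S)$. If $x\in\mu_{t}$ then $\mu(x)\geq t$, so by hypothesis $\mu(f(x))=\mu(x)\geq t$, giving $f(x)\in\mu_{t}$; thus $f(\mu_{t})\subseteq\mu_{t}$. Applying the same argument to $f^{-1}\in\operatorname{Aut}(S)$ yields $f^{-1}(\mu_{t})\subseteq\mu_{t}$, i.e.\ $\mu_{t}\subseteq f(\mu_{t})$, so $f(\mu_{t})=\mu_{t}$ and $\mu_{t}$ is a characteristic interior ideal.

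\textbf{Backward direction.} Assume every non-empty $\mu_{t}$ is a characteristic interior ideal. The level-subset criterion already forces $\mu$ to be a fuzzy interior ideal, so I only need $\mu(f(x))=\mu(x)$ for all $x\in S$ and $f\in\operatorname{Aut}(S)$. Set $t:=\mu(x)$; then $x\in\mu_{t}$, which is therefore non-empty and, by hypothesis, characteristic. Hence $f(x)\in f(\mu_{t})=\mu_{t}$, giving $\mu(f(x))\geq t=\mu(x)$. Re-running the same argument with the automorphism $f^{-1}$ and the element $f(x)$ gives $\mu(x)=\mu(f^{-1}(f(x)))\geq\mu(f(x))$, and the two inequalities combine to the required equality.

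\textbf{Main obstacle.} There is no real technical difficulty; the only pitfalls are (i)~remembering that the level-subset criterion for fuzzy \emph{interior} ideals (handling both the product condition $\mu(x\alpha a\beta y)\geq\mu(a)$ and the order-reversal $x\leq y\Rightarrow\mu(x)\geq\mu(y)$) must be invoked rather than the simpler one for fuzzy subsemigroups, and (ii)~using $f^{-1}\in\operatorname{Aut}(S)$ in both directions to upgrade the one-sided inclusions/inequalities into equalities. Once these two observations are in place the proof is essentially formal.
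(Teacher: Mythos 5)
Your proof is correct, but it takes a route essentially disjoint from the paper's own proof --- and, strikingly, it supplies exactly the half of the argument that the paper omits. The paper's proof is devoted entirely to the level-set criterion for fuzzy \emph{interior ideals}: in the forward direction it checks by hand that each non-empty $\mu_{t}$ is a subsemigroup, absorbs products $x\beta a\delta y$, and is downward closed; in the converse direction it recovers the fuzzy conditions via the midpoint contradiction $t_{0}=\frac{1}{2}[\mu(x_{0}\gamma y_{0})+\min\{\mu(x_{0}),\mu(y_{0})\}]$ together with the order argument. At no point does the paper's proof mention $Aut(S)$: the equivalence between $\mu(f(x))=\mu(x)$ for all $f\in Aut(S)$ and $f(\mu_{t})=\mu_{t}$ for all $f\in Aut(S)$ --- the only content of the word ``characteristic'' --- is never established, so the paper as written proves only the interior-ideal equivalence, not the full stated theorem. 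You invert this: you delegate the interior-ideal level-set criterion to the cited result of Khan--Mahmmod--Ali (legitimate, since that is precisely what the paper re-derives inline anyway) and then prove the automorphism-invariance transfer carefully in both directions, using $f^{-1}\in Aut(S)$ to upgrade $f(\mu_{t})\subseteq\mu_{t}$ to equality in the forward direction, and applying the argument to $f^{-1}$ and the element $f(x)$ to upgrade $\mu(f(x))\geq\mu(x)$ to equality in the backward direction; both steps are exactly right. The net comparison: your write-up, granting the cited criterion, is a complete proof of the theorem, whereas the paper's is self-contained on the part it treats but incomplete as a proof of the statement; splicing the paper's inline verification into your skeleton in place of the citation would yield a proof that is both complete and self-contained.
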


\begin{proof}
Let $\mu$ be a fuzzy interior ideal of $S$ and $t\in [0,1]$ be such that $\mu_{t}$ is non-empty. Let $x,y\in\mu_{t}$ and $\gamma\in\Gamma.$ Then $\mu(x)\geq t$ and $\mu(y)\geq t.$ Now since $\mu$ is a fuzzy interior ideal, it is a fuzzy subsemigroup of $S$ and hence $\mu(x\gamma y)\geq\min\{\mu(x),\mu(y)\}.$ Thus we see that $\mu(x\gamma y)\geq t.$ Consequently, $x\gamma y\in\mu_{t}.$ Hence $\mu_{t}$ is a subsemigroup of $S.$ Now let $x,y\in S;$ $\beta,\delta\in\Gamma$ and $a\in\mu_{t}.$ Then $\mu(x\beta a\delta y)\geq\mu(a)\geq t$ and so $x\beta a\delta y\in\mu_{t}.$

Let $x,y\in S$ be such that $y\leq x.$ Let $x\in \mu_{t}.$ Then $\mu(x)\geq t.$ Since $\mu$ is a fuzzy interior ideal of $S,$ so $\mu(y)\geq\mu(x)\geq t.$ Consequently, $y\in \mu_{t}.$ Hence we conclude that $\mu_{t}$ is an interior ideal of $S.$

In order to prove the converse, we have to show that $\mu$ satisfies the condition of Definition $2.7$ and the condition of Definition $2.8.$ If the condition of Definition $2.7$ is false, then there exist $x_{0},y_{0}\in S,$ $\gamma\in\Gamma$ such that $\mu(x_{0}\gamma y_{0})<\min\{\mu(x_{0}),\mu(y_{0})\}.$  Taking $t_{0}:=\frac{1}{2}[\mu(x_{0}\gamma y_{0})+\min\{\mu(x_{0}),\mu(y_{0})\}],$ we see that $\mu(x_{0}\gamma y_{0}) <t_{0} <\min\{\mu(x_{0}),\mu(y_{0})\}.$ This implies that $x_{0},y_{0}\in\mu_{t_{0}}$ and $x_{0}\gamma y_{0}\notin\mu_{t_{0}},$ which is a contradiction. Hence the condition of Definition $2.7$ is true. Similarly we can prove the other condition also.

Let $x,y\in S$ be such that $x\leq y.$ Let $\mu(y)= t,$ then $y\in \mu_{t}.$ Since $\mu_{t}$ is an interior ideal of $S,$ so $x\in \mu_{t}.$ Then $\mu(x)\geq t=\mu(y).$ Consequently, $\mu$ is a fuzzy interior ideal of $S.$
\end{proof}

\begin{theorem}
Let $A$ be a non-empty subset of a Po-$\Gamma$-semigroup $S.$ Then $A$ is a characteristic interior ideal of $S$ if and only if its characteristic function $\chi_{A}$ is a fuzzy characteristic interior ideal of $S.$
\end{theorem}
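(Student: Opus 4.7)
My plan is to derive this theorem as an immediate corollary of Theorem 3.3, exploiting the fact that the level sets of a characteristic function are extremely simple: one checks directly that $(\chi_{A})_{t} = A$ for every $t\in(0,1]$ and $(\chi_{A})_{0} = S$. Thus the correspondence between $A$ and $\chi_{A}$ is controlled entirely by the $t$-cut machinery already established.

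For the forward direction, assume $A$ is a characteristic interior ideal of $S$. By Theorem 3.3 it suffices to show that every non-empty $t$-cut of $\chi_{A}$ is a characteristic interior ideal of $S$. For $t\in(0,1]$ the $t$-cut coincides with $A$, so the hypothesis supplies exactly what is needed. For $t=0$ the $t$-cut equals $S$, and a one-line check shows that $S$ is always a characteristic interior ideal of itself: the subsemigroup and $S\Gamma S\Gamma S\subseteq S$ conditions are trivial, downward closure is automatic, and any $f\in Aut(S)$ maps $S$ bijectively to $S$.

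For the reverse direction, assume $\chi_{A}$ is a fuzzy characteristic interior ideal. Since $A$ is non-empty, $(\chi_{A})_{1}=A$ is non-empty, so Theorem 3.3 applied at $t=1$ yields immediately that $A$ is a characteristic interior ideal of $S$.

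The proof therefore reduces to a careful identification of the level sets of $\chi_{A}$, and no genuine obstacle arises. Should a self-contained argument avoiding Theorem 3.3 be preferred, I would instead proceed by case analysis on membership in $A$: the three defining inequalities in Definitions 2.7 and 2.8, specialised to $\chi_{A}$, reduce respectively to $A\Gamma A\subseteq A$, $S\Gamma A\Gamma S\subseteq A$, and the downward closure of $A$; and the invariance $\chi_{A}(f(x))=\chi_{A}(x)$ becomes the equivalence $x\in A\Leftrightarrow f(x)\in A$, which is in turn equivalent to $f(A)=A$ once one invokes the bijectivity of $f\in Aut(S)$ (the inclusion $A\subseteq f(A)$ in particular requires applying the invariance to $f^{-1}(x)$).
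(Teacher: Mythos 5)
Your proof is correct, but it takes a genuinely different route from the paper's. The paper argues directly: it invokes Lemma 1 of \cite{Kh} to pass between ``$A$ is an interior ideal'' and ``$\chi_{A}$ is a fuzzy interior ideal'', and then settles the automorphism condition by hand --- a case analysis on $x\in A$ versus $x\notin A$ in one direction, and in the other the observation that $\chi_{A}(f(a))=\chi_{A}(a)=1$ forces $f(a)\in A$ (giving $f(A)\subseteq A$), plus a surjectivity-and-contradiction argument for $A\subseteq f(A)$, which is in substance the $f^{-1}$ trick you mention at the end. You instead reduce everything to Theorem 3.3 by computing the level sets of $\chi_{A}$: this is shorter and cleaner, and your handling of the edge case $t=0$ (where the cut is all of $S$, trivially a characteristic interior ideal of itself) is exactly the care that reduction requires. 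One caveat is worth flagging: your argument is only as strong as Theorem 3.3, and the paper's own proof of Theorem 3.3 never verifies the ``characteristic'' (automorphism-invariance) half of that equivalence --- it only proves the correspondence between fuzzy interior ideals and interior-ideal cuts. The statement of Theorem 3.3 is nonetheless true: for the missing forward part, $\mu(f(x))=\mu(x)$ gives $x\in\mu_{t}\Leftrightarrow f(x)\in\mu_{t}$, hence $f(\mu_{t})=\mu_{t}$; for the converse, taking $t=\mu(x)$ shows $\mu(f(x))\geq\mu(x)$, and applying the same to $f^{-1}$ yields equality. So your proof stands, but within this paper it rests on a result whose written proof is incomplete, whereas the paper's direct argument for this theorem is self-contained on the invariance point (its external dependence being only Lemma 1 of \cite{Kh}). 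Your fallback self-contained argument coincides in substance with what the paper does.
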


\begin{proof}
Let $A$ be a characteristic interior ideal of $S.$ Then $A$ is an interior ideal of $S.$ Hence by Lemma $1\cite{Kh},$ $\chi_{A}$ is a fuzzy interior ideal of $S.$ Let $x\in S.$ If $x\in A,$ the $\chi_{A}(x)=1.$ Then for all $f\in Aut(S),f(x)\in f(A)=A$ whence $\chi_{A}(f(x))=1.$ If $x\notin A,$ then $\chi_{A}(x)=0.$ Then for all $f\in Aut(S),f(x)\notin f(A)$ whence $\chi_{A}(f(x))=0.$ Thus we see that $\chi_{A}(f(x))=\chi_{A}(x)$ for all $x\in S.$ Hence $\chi_{A}$ is a fuzzy characteristic interior ideal of $S.$

Conversely, let $\chi_{A}$ is a fuzzy characteristic interior ideal of $S.$ Then $\chi_{A}$ is a fuzzy interior ideal of $S.$ Hence by Lemma $1\cite{Kh},$ $A$ is an interior ideal of $S.$ now, let $f\in Aut(S)$ and $a\in A.$ Then $\chi_{A}(a)=1$ and so $\chi_{A}(f(a))=\chi_{A}(a)=1.$ Hence $f(a)\in A.$ Thus, we obtain $f(A)\subseteq A$ for all $f\in Aut(S).$ Again $a\in A,f\in Aut(S)\Rightarrow\exists b\in S\ni f(b)=a.$ If possible, suppose that $b\notin A.$ Then $\chi_{A}(b)=0.$ So $\chi_{A}(f(b))=\chi_{A}(b)=0.$ Hence $f(b)\notin A,i.e.,a\notin A-$ a contradiction. Hence $b\in A,i.e.,f(b)\in A.$ Thus we obtain $A\subseteq f(A)\forall f\in Aut(S).$ Hence the proof.
\end{proof}


\end{document}